\newcommand{\real}{{\mathbb{R}}}
\newcommand{\reals}{\real}
\renewcommand{\natural}{{\mathbb{N}}}
\newcommand{\naturals}{\natural}
\newtheorem{theorem}{Theorem}[section]
\newtheorem{lemma}[theorem]{Lemma}
\newtheorem{example}[theorem]{Example}
\newtheorem{definition}[theorem]{Definition}
\newtheorem{assumption}[theorem]{Assumption}
\newcommand{\probnoarg}{\mbox{$\mathbb{P}$}} 
\newcommand{\expectation}[1]{\mbox{$\mathbb{E}\left[#1\right]$}}
\newcommand{\fil}{\mathcal F}
\newcommand{\cs}{\mathcal Z}
\newcommand{\csd}{\mathcal U}
\newcommand{\risk}{\rho}
\newcommand{\upol}{\mathcal U^{\mathrm{poly}}}
\newcommand{\upolv}{\mathcal U^{\mathrm{poly}, V}}
\title{\LARGE \bf
A Framework for Time-Consistent, Risk-Averse \\Model Predictive Control: Theory and Algorithms }
\author{Yin-Lam Chow, Marco Pavone
\thanks{Y.-L. Chow is with the Institute for Computational and Mathematical Engineering, Stanford University, Stanford, CA 94305, USA. Email: {\tt ychow@stanford.edu}.}
\thanks{M. Pavone is with the Department of Aeronautics and Astronautics, Stanford University, Stanford, CA 94305, USA. Email: {\tt pavone@stanford.edu}.}
}
\begin{document}
\maketitle
\thispagestyle{empty}
\pagestyle{empty}

\begin{abstract}
In this paper we present a framework for risk-averse model predictive control (MPC) of linear systems affected by multiplicative uncertainty. Our key innovation is to consider time-consistent, dynamic risk metrics as objective functions to be minimized. This framework is axiomatically justified in terms of time-consistency of risk preferences, is amenable to dynamic optimization, and is unifying  in the sense that it captures a full range of risk assessments from risk-neutral to worst case. Within this framework, we propose and analyze an online risk-averse MPC algorithm that is provably stabilizing. Furthermore, by exploiting the dual representation of time-consistent, dynamic risk metrics, we cast the computation of the MPC control law as a convex optimization problem amenable to implementation on embedded systems. Simulation results are presented and discussed.
\end{abstract}

\section{Introduction}\label{sec:intro}
Model predictive control (MPC) is one of  the most popular methods to address optimal control  
problems in an online setting \cite{Qin_Badgwell_03, Wang.Boyd:CST10}. The key idea behind MPC is to obtain the control action by repeatedly solving, at each sampling instant,
a finite horizon open-loop optimal control problem, using the current state of the plant as the initial state; the result of the optimization is an 
(open-loop) control sequence, whose first element is applied to control the system \cite{Mayne.ea:Auto00}.

The classic MPC framework does not provide a systematic way to address  model uncertainties and disturbances \cite{Bernardini_Bemporad_12}. Accordingly, one of the main research thrusts for MPC is to find techniques to guarantee persistent feasibility and stability in the presence of disturbances. Essentially, current techniques fall into two categories: (1) min-max (or worst-case) formulations, where the performance indices to be minimized are computed with respect to the worst possible disturbance realization \cite{Kothare_Balakrishnan_Morari_96, Souza_06, Park_Kwon_02}, and (2) stochastic formulations, 
where \emph{risk-neutral expected} values of performance indices (and possibly constraints) are considered \cite{Bernardini_Bemporad_12, Primbs_Sung_09}.

The main drawback of the worst-case approach is that the control law may be too conservative, since the MPC law is required to guarantee stability and constraint fulfillment under the worst-case scenario. On the other hand, stochastic formulations whereby the assessment of future random outcomes is accomplished through a risk-neutral expectation may be unsuitable in scenarios where one desires to protect the system from large deviations. 

The objective of this paper is to introduce a systematic method to include risk-aversion in MPC. The inclusion of  risk aversion is important for several  reasons. First, in uncertain environments, a guaranteed-feasible solution may not exist and the issue becomes how to properly balance between planner conservatism and the risk of infeasibility (clearly this can not be achieved with a worst-case approach). Second, risk-aversion allows the control designer to increase policy robustness by limiting confidence in the model. Third, risk-aversion serves to prevent rare undesirable events. Finally, in a reinforcement learning framework when the world model is not accurately known, a risk-averse agent can cautiously balance exploration versus exploitation for fast convergence and to avoid ``bad" states that could potentially lead to a catastrophic failure \cite{Defourny_08, Moldovan:2012}.

Inclusion of risk-aversion in MPC is difficult for two main reasons. First,  it appears to be difficult to model risk in multi-period settings in a way that matches  intuition  \cite{Moldovan:2012}. In particular, a common strategy to include risk-aversion in multi-period contexts is to apply a \emph{static} risk metric, which assesses risk from the perspective of a single point in time,  to the total cost of the future stream of random outcomes. However, using static risk metrics  in multi-period decision problems  can lead to an over or under-estimation of the true dynamic risk, as well as to a potentially ``inconsistent" behavior, see \cite{Iancu_11} and references therein. Second, optimization problems involving risk metrics tend to be computationally intractable, as they do not allow a recursive estimation of risk. In practice, risk-averse MPC often resolves into the minimization of the expected value of an aggregate, risk-weighted objective function \cite{Zafra:11b,vanOverloop:08}.

In this paper, as a radical departure from traditional approaches, we leverage recent strides in the theory of \emph{dynamic} risk metrics developed by the operations research community \cite{rus_shapiro_06, rus_09} to include risk aversion in MPC. The key property of \emph{dynamic} risk metrics is that,  by assessing risk at multiple points in time, one can guarantee \emph{time-consistency} of risk preferences over time  \cite{rus_shapiro_06, rus_09}. In particular, the essential requirement for time consistency is that if a certain outcome is considered less risky in all states of the world at stage $k+1$, then it should also be considered less risky at stage $k$. Remarkably, in \cite{rus_09}, it is proven that any risk measure that is time consistent can be represented as a \emph{composition} of one-step risk metrics, in other words, in multi-period settings, risk (as expected) should be compounded over time.

The contribution of this paper is threefold. First, we introduce a notion of dynamic risk metric, referred to as Markov dynamic polytopic risk metric, that captures a full range of risk assessments and enjoys a geometrical structure that is particularly favorable from a computational standpoint. Second, we present and analyze a \emph{risk-averse} MPC algorithm that minimizes in a receding--horizon fashion a Markov dynamic polytopic risk metric, under the assumption that the system's model is linear and is affected by multiplicative uncertainty. Finally, by exploring the ``geometrical" structure of Markov dynamic polytopic risk metrics, we present a convex programming formulation for risk-averse MPC that is amenable to real-time implementation (for moderate horizon lengths). Our framework has three main advantages: (1) it is axiomatically justified, in the sense that risk, by construction, is assessed in a time-consistent fashion; (2) it is amenable to dynamic and convex optimization, thanks to the compositional form of Markov dynamic polytopic risk metrics and their  geometry; and (3) it is general, in the sense that it captures a full range of risk assessments from risk-neutral to worst case. In this respect, our formulation represents a \emph{unifying} approach for risk-averse MPC.

The rest of the paper is organized as follows. In Section \ref{sec:prelim} we provide a review of the theory of dynamic risk metrics. In Section \ref{sec:sys} we discuss the stochastic model we will consider in this paper. In Section \ref{sec:risk} we introduce and discuss the notion of Markov dynamic polytopic risk metrics. In Section \ref{sec:IHC} we state the infinite horizon optimal control problem we wish to address and in Section \ref{sec:stab} we derive conditions for risk-averse closed-loop stability.  In Section \ref{sec:MPC} and \ref{sec:alg} we present, respectively, a risk-averse model predictive control law and approaches for its computation. Numerical experiments are presented and discussed in Section \ref{sec:example}. Finally,  in Section \ref{sec:conclusion} we draw some conclusions and we discuss directions for future work.

%
%
%
%

\section{Review of Dynamic Risk Theory}\label{sec:prelim}
In this section, we briefly describe the theory of coherent and dynamic risk metrics, on which we will rely extensively later in the paper. The material presented in this section summarizes several novel results in risk theory achieved in the past 10 years. Our presentation strives to present this material in a intuitive fashion and with a notation tailored to control applications.

\subsection{Static, coherent measures of risk}\label{subsec:static_risk}
Consider a probability space $(\Omega, \fil, \probnoarg)$, where $\Omega$ is the set of outcomes (sample space), $\fil$ is a $\sigma$-algebra over $\Omega$ representing the set of events we are interested in, and $\probnoarg$ is a probability measure over $\fil$. In this paper we will focus on disturbance models characterized by probability \emph{mass} functions, hence we restrict our attention to finite probability spaces (i.e.,  $\Omega$ has a finite number of elements or, equivalently, $\fil$ is a finitely generated algebra). 
Denote with $\cs$ the space of random variables $Z:\Omega\mapsto (-\infty, \infty)$ defined over the probability space $(\Omega, \fil, \mathbb P)$. In this paper a random variable $Z\in \cs$ is interpreted as a cost, i.e., the smaller the realization of $Z$, the better. 
For $Z, W$, we denote by $Z\leq W$ the point-wise partial order, i.e., $Z(\omega)\leq W(\omega)$ for all $\omega\in \Omega$.

By a \emph{risk measure} (or \emph{risk metric}, we will use these terms interchangeably) we understand a function $\risk(Z)$ that maps an uncertain outcome $Z$ into the extended real line $\reals \cup\{ +\infty\}\cup \{-\infty\}$. In this paper we restrict our analysis to \emph{coherent risk measures}, defined as follows:

\begin{definition}[Coherent Risk Measures]\label{def:crm}
A coherent risk measure is a mapping $\risk:\cs \rightarrow \reals$, satisfying the following four axioms:
\begin{description}
\item[A1] Convexity: $\risk(\lambda Z + (1-\lambda)W)\leq \lambda\risk(Z) + (1-\lambda)\risk(W)$, for all $\lambda\in[0,1]$ and $Z,W \in\cs$;
\item[A2] Monotonicity:  if $Z\leq W$ and $Z,W \in\cs$,  then $\risk(Z)\leq\risk(W)$;
\item[A3] Translation invariance: if $a\in \reals$ and $Z\in \cs$, then $\risk(Z+a)=\rho(Z) + a$;
\item[A4] Positive homogeneity: if $\lambda\geq0$ and $Z\in \cs$, then $\risk(\lambda Z) = \lambda \risk(Z)$.
\end{description}
\end{definition} 
These axioms were originally conceived in \cite{artzner_delbaen_eber_heath_98} and ensure the ``rationality" of single-period risk assessments (we refer the reader to \cite{artzner_delbaen_eber_heath_98} for a detailed motivation of these axioms). One of the main properties for coherent risk metrics is a universal representation theorem for coherent risk metrics, which in the context of \emph{finite} probability spaces takes the following form:
\begin{theorem}[Representation Theorem for Finite Probability Spaces {\cite[page 265]{Shapiro_Dentcheva_Ruszczynski_09}}]\label{thrm:rep_finite}
Consider the probability space $\{\Omega, \fil, \probnoarg\}$ where $\Omega$ is finite, i.e., $\Omega=\{\omega_1, \ldots, \omega_L\}$, $\fil$ is the $\sigma$-algebra of all subsets (i.e., $\fil = 2^{\Omega}$), and $\probnoarg = (p(1), \ldots, p(L))$, with all probabilities positive.  Let $\mathcal B$ be the set of probability density functions:
\[{\small
\mathcal B:=\Bigl \{ \zeta\in \reals^L: \sum_{j=1}^L \, p(j)\zeta(j)=1, \zeta\geq 0 \Bigr\}.}
\]
The risk measure $\risk:\cs \rightarrow \reals$ is a coherent risk measure if and only if there exists a convex bounded and weakly* closed set $\csd \subset \mathcal B$ such that $
\risk(Z)=\max_{\zeta\in \csd} \mathbb E_{\zeta}[Z]$.
\end{theorem}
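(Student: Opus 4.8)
The plan is to prove the two implications separately. The ``if'' direction (sufficiency) is a routine verification, while the ``only if'' direction (necessity) is where the content lies and rests on convex duality. For sufficiency, I would check axioms A1--A4 directly from the formula $\risk(Z)=\max_{\zeta\in\csd}\mathbb{E}_{\zeta}[Z]$. For each fixed $\zeta$ the map $Z\mapsto\mathbb{E}_{\zeta}[Z]$ is linear, so the pointwise maximum over $\zeta$ is convex, giving A1. Monotonicity (A2) follows because $\zeta\geq 0$ and $p(j)>0$ make $\mathbb{E}_{\zeta}[\cdot]$ order-preserving. Translation invariance (A3) uses the normalization $\sum_j p(j)\zeta(j)=1$ built into $\mathcal B$, and positive homogeneity (A4) uses that scaling by $\lambda\geq 0$ commutes with the maximum. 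None of these steps presents any difficulty.

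The substantive direction is necessity. I would identify $\cs$ with $\reals^L$ equipped with the bilinear pairing $\langle\zeta,Z\rangle:=\mathbb{E}_{\zeta}[Z]=\sum_j p(j)\zeta(j)Z(j)$, and form the Fenchel conjugate $\risk^*(\zeta):=\sup_{Z\in\cs}\{\langle\zeta,Z\rangle-\risk(Z)\}$. By A1 the function $\risk$ is convex, and since it is finite-valued on all of $\reals^L$ it is automatically continuous, hence proper and lower semicontinuous; the Fenchel--Moreau theorem then gives $\risk=\risk^{**}$. The key observation is that positive homogeneity (A4) forces the conjugate to be an indicator: for any $\zeta$, examining $t\mapsto t\,(\langle\zeta,Z\rangle-\risk(Z))$ as $t\to\infty$ and $t\to 0^+$ shows that $\risk^*(\zeta)=0$ when $\langle\zeta,Z\rangle\leq\risk(Z)$ for all $Z$, and $\risk^*(\zeta)=+\infty$ otherwise. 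Setting $\csd:=\{\zeta:\langle\zeta,Z\rangle\leq\risk(Z)\ \forall Z\}$, biconjugation yields $\risk(Z)=\sup_{\zeta\in\csd}\langle\zeta,Z\rangle$.

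It then remains to identify $\csd$ and upgrade the supremum to a maximum. The set $\csd$ is an intersection of closed half-spaces, hence convex and closed (and in finite dimensions weak* closedness coincides with closedness). To see $\csd\subset\mathcal B$, I would test the defining inequality on well-chosen outcomes: taking $Z\equiv a$ constant and using $\risk(a)=a$ (which follows from A3 together with $\risk(0)=0$, itself a consequence of A4 with $\lambda=0$) forces $\sum_j p(j)\zeta(j)=1$, while taking $Z=-e_k$ and using that monotonicity gives $\risk(Z)\leq 0$ whenever $Z\leq 0$ forces $\zeta(k)\geq 0$. This places $\csd$ inside $\mathcal B$, which is bounded since $\zeta\geq 0$ and $\sum_j p(j)\zeta(j)=1$ with $p(j)>0$ imply $\zeta(j)\leq 1/p(j)$. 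Thus $\csd$ is compact, the supremum is attained, and the representation $\risk(Z)=\max_{\zeta\in\csd}\mathbb{E}_{\zeta}[Z]$ follows.

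I expect the main obstacle to be the careful bookkeeping needed to extract $\csd\subset\mathcal B$ from the axioms---specifically, selecting the right test functions $Z$ to convert each of A2 and A3 into the correct constraint on $\zeta$---and confirming that the hypotheses of Fenchel--Moreau are genuinely met. The latter is clean here only because the finiteness of $\Omega$ collapses the topological subtleties (continuity of finite convex functions, and the coincidence of weak* and norm closedness) that would otherwise demand care in an infinite-dimensional setting.
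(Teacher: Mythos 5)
Your proof is correct. Note that the paper itself offers no proof of this statement---it is quoted as a known result from the cited reference \cite{Shapiro_Dentcheva_Ruszczynski_09}---and your argument (direct verification of A1--A4 for sufficiency; Fenchel--Moreau biconjugation for necessity, with positive homogeneity forcing $\risk^*$ to be the indicator of $\csd=\setdef{\zeta}{\langle\zeta,Z\rangle\leq\risk(Z)\ \forall Z}$, and A2/A3 tested on $Z=-e_k$ and constants to place $\csd$ inside $\mathcal B$) is precisely the standard conjugate-duality proof given in that reference, specialized to the finite-dimensional setting where continuity of finite convex functions, coincidence of weak* and norm closedness, and attainment of the maximum all come for free.
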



The result essentially states that any coherent risk measure is an expectation with respect to a worst-case density function $\zeta$, chosen adversarially from a suitable \emph{set} of test density functions  (referred to as  \emph{risk envelope}). 

%


\subsection{Dynamic, time-consistent  measures of risk}\label{subsection_time_cons}

This section provides a multi-period generalization of the concepts presented in Section \ref{subsec:static_risk} and follows closely the discussion in \cite{rus_09}. Consider a probability space $(\Omega, \fil, \mathbb P)$, a filtration $\fil_0\subset \fil_1\subset \fil_2 \cdots \subset \fil_N \subset \fil$, and an adapted sequence of real-valued random variables $Z_k$, $k\in \{0, \ldots,N\}$. We assume that $\fil_0 = \{\Omega, \emptyset\}$, i.e., $Z_0$ is deterministic. The variables $Z_k$ can be interpreted as stage-wise costs. For each $k\in\{0, \ldots, N\}$, denote with $\cs_k$ the space of random variables defined over the probability space $(\Omega, \fil_k, \mathbb P)$; also, let $\cs_{k, N}:=\cs_k \times \cdots \times \cs_N$. Given sequences $Z = \{Z_k,\ldots, Z_N\}\in \cs_{k, N}$ and $W=\{W_k,\ldots, W_N\}\in \cs_{k, N}$, we interpret $Z\leq W$ component-wise, i.e., $Z_j\leq W_j$ for all $j\in \{k,\ldots, N\}$.

The fundamental question in the theory of dynamic risk measures is the following: how do we evaluate the risk of the sequence $\{Z_k, \ldots, Z_N\}$ from the perspective of
stage $k$? The answer, within the modern theory of risk, relies on two key intuitive facts \cite{rus_09}. First, in dynamic settings  the specification of risk preferences should no longer entail constructing a single risk metric but rather a \emph{sequence} of risk metrics  $\{\risk_{k,N}\}_{k=0}^{N}$, each mapping a future stream of random costs into a risk metric/assessment at time $k$. This motivates the following definition.

\begin{definition}[Dynamic Risk Measure]
A dynamic risk measure is a sequence of mappings  $\risk_{k,N}:\cs_{k, N}\rightarrow\cs_k$, $k\in\{0, \ldots,N\}$, obeying the following monotonicity property:
\[
\risk_{k,N}(Z)\!\leq \!\risk_{k,N}(W)   \text{ for all } Z,W \!\in\!\cs_{k,N} \text{ such that } Z\leq W.
\]
\end{definition}
The above monotonicity property (analogous to axiom A2 in Definition \ref{def:crm}) is, arguably, a natural requirement for any meaningful dynamic risk measure.

Second, the sequence of metrics $\{ \risk_{k,N} \}_{k=0}^{N}$ should be constructed so that  the risk preference profile is \emph{consistent} over time \cite{Cheridito_09, shapiro_09, Iancu_11}. A widely accepted notion of time-consistency is as follows \cite{rus_09}: if a certain outcome is considered less risky in all states of the world at stage $k+1$, then it should also be considered less risky at stage $k$. 

The following example (adapted from \cite{Roorda:05}) shows how dynamic risk measures as defined above might indeed result in \emph{time-inconsistent}, and ultimately undesirable, behaviors.
\begin{example}\label{ex:paradox}
Consider the simple setting whereby there is a final cost $Z$ and one seeks to evaluate such cost from the perspective of earlier stages. Consider the three-stage scenario tree in Figure \ref{fig:sm}, with the elementary events $\Omega = \{UU, UD, DU, DD \}$, and the filtration $\fil_0=\{\emptyset, \Omega\}$, $\fil_1 = \Bigl \{\emptyset, \{U\}, \{D\}, \Omega \Bigr\}$, and $\fil_2 = 2^{\Omega}$. Consider the dynamic risk measure:
\[
\risk_{k,N}(Z) := \max_{q \in \mathcal \csd}\mathbb{E}_{q}[Z | \fil_k], \quad k=0,1,2
\] 
where $\csd$ contains two probability measures, one corresponding to $p =0.4$, and the other one to $p =0.6$
Assume that the random cost is $Z(UU) = Z(DD) = 0$, and $Z(UD) = Z(DU)=100$. Then, one has $\risk_1(Z)(\omega) = 60$ for all $\omega$, and $\risk_0(Z)(\omega) = 48$.  Therefore, $Z$ is deemed strictly riskier than a deterministic cost $W=50$ in \emph{all} states of nature at time $k=1$, but nonetheless $W$ is deemed riskier than $Z$ at time $k=0$ -- a paradox!
\end{example}
\vspace{-0.5truecm}
\begin{figure}[h]
\centering
{
  \includegraphics[width = 0.3\textwidth]{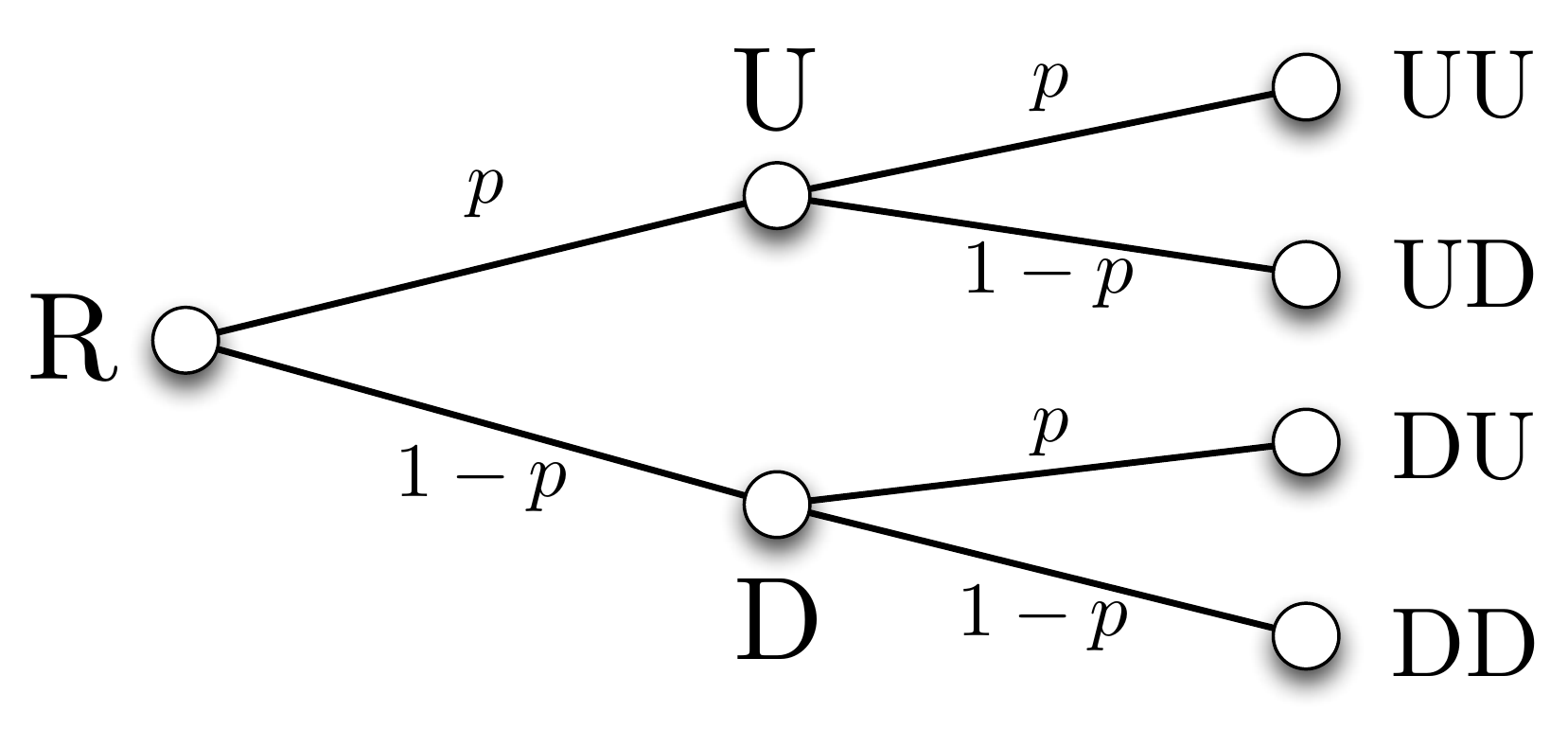}
}
      \caption{Scenario tree for example \ref{ex:paradox}.}
      \label{fig:sm}.
\end{figure}
\vspace{-0.5truecm}
It is important to note that there is nothing special about the selection of this example, similar paradoxical results could be obtained with other risk metrics. We refer the reader to \cite{rus_09, shapiro_09, Iancu_11} for further insights into the notion of time consistency and its practical relevance.
The issue then is what additional ``structural" properties are required for a dynamic risk measure to be time consistent. We first provide a rigorous version of the previous definition of time-consistency.
\begin{definition}[Time Consistency (\cite{rus_09})]
A dynamic risk measure $\{ \risk_{k,N}\}_{k=0}^N$ is time-consistent if, for all $0\leq l<k\leq N$ and all sequences $Z, W \in \cs_{l,N}$, the conditions
\begin{equation}
\begin{split}
&Z_i = W_i,\,\, i = l,\ldots,k-1, \text{ and }\\
&\risk_{k,N}(Z_k, \ldots,Z_N)\leq \risk_{k,N}(W_k, \ldots,W_N),
\end{split}
\end{equation}
imply that
\[
 \risk_{l,N}(Z_l, \ldots,Z_N)\leq \risk_{l,N}(W_l, \ldots,W_N).
\]
\end{definition}


As we will see in Theorem \ref{thrm:tcc}, the notion of time-consistent risk measures is tightly linked to the notion of coherent risk measures, whose generalization to the multi-period setting is given below:
\begin{definition}[Coherent One-step Conditional Risk Measures (\cite{rus_09})]
A coherent one-step conditional risk measure is a mapping $\risk_k:\cs_{k+1}\rightarrow \cs_k$, $k\in\{0,\ldots,N-1\}$, with the following four properties:
\begin{itemize}
\item Convexity: $\risk_k(\lambda Z + (1-\lambda)W)\leq \lambda\risk_k(Z) + (1-\lambda)\risk_k(W)$, $\forall \lambda\in[0,1]$ and $Z,W \in\cs_{k+1}$;
\item Monotonicity:  if $Z\leq W$ then $\risk_k(Z)\leq\risk_k(W)$, $\forall Z,W \in\cs_{k+1}$;
\item Translation invariance:  $\risk_k(Z+W)=Z + \risk_k(W)$, $\forall Z\in\cs_k$ and $W \in \cs_{k+1}$;
\item Positive homogeneity: $\risk_k(\lambda Z) = \lambda \risk_k(Z)$, $\forall Z \in \cs_{k+1}$ and $\lambda\geq 0$.
\end{itemize}
\end{definition} 

%
We are now in a position to state the main result of this section.
\begin{theorem}[Dynamic, Time-consistent Risk Measures (\cite{rus_09})]\label{thrm:tcc}
Consider, for each $k\in\{0,\ldots,N\}$, the mappings $\risk_{k,N}:\cs_{k, N}\rightarrow\cs_k$ defined as
\begin{equation}\label{eq:tcrisk}
\begin{split}
\risk_{k,N} = Z_k& + \risk_k(Z_{k+1} + \risk_{k+1}(Z_{k+2}+\ldots+\\&
\risk_{N-2}(Z_{N-1}+\risk_{N-1}(Z_N))\ldots)),
\end{split}
\end{equation}
where the $\risk_k$'s are coherent one-step conditional risk measures. Then, the ensemble of such mappings is a dynamic, time-consistent  risk measure.
\end{theorem}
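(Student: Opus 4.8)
The plan is to reduce both claims to a single structural identity that can be read off directly from the nested definition \eqref{eq:tcrisk}, namely
\begin{equation}\label{eq:recursion}
\risk_{k,N}(Z_k,\ldots,Z_N) = Z_k + \risk_k\bigl(\risk_{k+1,N}(Z_{k+1},\ldots,Z_N)\bigr).
\end{equation}
This holds because the expression sitting inside the outermost $\risk_k$ in \eqref{eq:tcrisk} is exactly $\risk_{k+1,N}(Z_{k+1},\ldots,Z_N)$. Before invoking \eqref{eq:recursion} I would first check well-definedness by a short downward induction: $\risk_{N,N}(Z_N)=Z_N\in\cs_N$, and if $\risk_{k+1,N}$ maps into $\cs_{k+1}$, then, since $\risk_k:\cs_{k+1}\rightarrow\cs_k$ and $Z_k\in\cs_k$, the right-hand side of \eqref{eq:recursion} lies in $\cs_k$. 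This confirms that each one-step measure is applied to an argument in its proper domain, so the composition is meaningful and $\risk_{k,N}:\cs_{k,N}\rightarrow\cs_k$ as required.

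To show that $\{\risk_{k,N}\}_{k=0}^N$ is a dynamic risk measure, I would establish the required monotonicity property by downward induction on $k$ using \eqref{eq:recursion}. Fix $Z\leq W$ component-wise. The base case $k=N$ is immediate since $\risk_{N,N}(Z_N)=Z_N\leq W_N$. For the inductive step, the hypothesis $\risk_{k+1,N}(Z_{k+1},\ldots)\leq\risk_{k+1,N}(W_{k+1},\ldots)$ together with monotonicity of the one-step measure $\risk_k$ yields $\risk_k(\risk_{k+1,N}(Z_{k+1},\ldots))\leq\risk_k(\risk_{k+1,N}(W_{k+1},\ldots))$; adding $Z_k\leq W_k$ to the respective sides preserves the pointwise order and gives $\risk_{k,N}(Z)\leq\risk_{k,N}(W)$.

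For time-consistency I would fix $0\leq l<k\leq N$ and sequences with $Z_i=W_i$ for $i=l,\ldots,k-1$ and $\risk_{k,N}(Z_k,\ldots,Z_N)\leq\risk_{k,N}(W_k,\ldots,W_N)$, and prove by downward induction on $j$ that $\risk_{j,N}(Z_j,\ldots,Z_N)\leq\risk_{j,N}(W_j,\ldots,W_N)$ for every $j=k,k-1,\ldots,l$. The case $j=k$ is the second hypothesis. In the step from $j+1$ to $j$ (with $l\leq j\leq k-1$), monotonicity of $\risk_j$ propagates the inequality through the one-step measure, and because $j$ lies in the range $\{l,\ldots,k-1\}$ we have $Z_j=W_j$, so the additive term is identical on both sides and the order is preserved; taking $j=l$ gives the conclusion. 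I expect the only genuinely delicate point to be the bookkeeping in this last induction, keeping straight that the equal-stage-cost hypothesis $Z_i=W_i$ is exactly what is needed at each of the indices $l,\ldots,k-1$ traversed, while the substantive content of the argument (the initial comparison at stage $k$) is carried along unchanged. All the real work is done by monotonicity of the one-step measures; translation invariance, convexity, and positive homogeneity play no role in this particular statement.
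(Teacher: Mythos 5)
Your proof is correct. Note that the paper itself gives no proof of this theorem --- it is quoted from the cited reference (Ruszczy\'nski, \emph{Risk-averse dynamic programming for Markov decision processes}), so there is no in-paper argument to compare against. Your argument is the standard one for this direction of that result: the recursion $\risk_{k,N}(Z_k,\ldots,Z_N)=Z_k+\risk_k\bigl(\risk_{k+1,N}(Z_{k+1},\ldots,Z_N)\bigr)$ read off from the nested definition, well-definedness and monotonicity by downward induction, and time consistency by propagating the stage-$k$ comparison down to stage $l$ using only monotonicity of the one-step measures and the equality of the intermediate costs. Your closing observation is also accurate and worth keeping: convexity, translation invariance, and positive homogeneity are not needed here; they (together with additional structure) matter for the \emph{converse} direction --- that any time-consistent dynamic risk measure must have the nested compositional form --- which is the substantive content of Theorem 1 in the cited work and is not part of the statement being proved.
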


Remarkably, Theorem 1 in \cite{rus_09} shows (under weak assumptions) that the ``multi-stage composition" in equation \eqref{eq:tcrisk} is indeed \emph{necessary for time consistency}. Accordingly, in the remainder of this paper, we will focus on the \emph{dynamic, time-consistent risk measures} characterized in Theorem \ref{thrm:tcc}.

\section{Model Description}\label{sec:sys}
Consider the discrete time system:
\begin{equation}
x_{k+1}=A(w_k)x_k+B(w_k)u_k,\label{eqn_sys}
\end{equation}
where $k\in \naturals$ is the time index, $x_k\in\reals^{N_x}$ is the state, $u_k\in\reals^{N_u}$ is the (unconstrained) control input, and $w_k\in\mathcal{W}$ is the process disturbance. We assume that the initial condition $x_0$ is deterministic. We assume that $\mathcal W$ is a finite set of cardinality $L$, i.e., $\mathcal W = \{w^{[1]}, \ldots, w^{[L]}\}$. For each stage $k$  and state-control pair $(x_k, u_k)$, the process disturbance $w_k$ is drawn from set $\mathcal W$ according to the  probability mass function 
\[
p=[p(1),\, p(2),\ldots,\, p(L)]^T,
\]
where $p(j)=\mathbb{P}(w_k=w^{[j]})$, $j\in\{1,\ldots,L\}$. Without loss of generality, we assume that $p(j)>0$ for all $j$. Note that the probability mass function for the process disturbance is time-invariant, and that the process disturbance is \emph{independent} of the process history and of the state-control pair $(x_k, u_k)$. Under these assumptions, the stochastic process $\{x_k\}$ is clearly a Markov process.

By enumerating all $L$ realizations of the process disturbance $w_k$, system \eqref{eqn_sys} can be rewritten as:
\[{\small
x_{k+1}=\left\{\begin{array}{cc}
A_{1}x_k+B_{1}u_k&\text{if $w_k=w^{[1]}$},\\
\vdots&\vdots\\
A_{L}x_k+B_{L}u_k&\text{if $w_k=w^{[L]}$},
\end{array}\right.}
\]
where $A_{j}:=A(w^{[j]})$ and $B_{j} := B(w^{[j]})$, $j\in \{1, \ldots, L\}$.

The results presented in this paper can be immediately extended to the time-varying case (i.e., where  the probability mass function for the process disturbance is time-varying). To simplify notation, however, we prefer to focus this paper on the time-invariant case.
\section{Markov Polytopic Risk Measures}\label{sec:risk}
In this section we \emph{refine} the notion of dynamic time-consistent risk metrics (as defined in Theorem \ref{thrm:tcc}) in two ways: (1) we add a polytopic structure to the dual representation of coherent risk metrics, and (2) we add a Markovian structure. This will lead to the definition of Markov dynamic polytopic risk metrics, which enjoy favorable computational properties and, at the same time,  maintain most of the generality of  dynamic time-consistent risk metrics.

\subsection{Polytopic risk measures}
According to the discussion in Section \ref{sec:sys}, the probability space for the process disturbance has a finite number of elements. Accordingly, consider Theorem \ref{thrm:rep_finite}; by definition of expectation, one has $\mathbb E_{\zeta}[Z]  =\sum_{j=1}^L\, Z(j) p(j) \zeta(j)$. In our framework (inspired by \cite{Eichhorn_05}), we consider coherent risk measures where the risk envelope $\csd$ is a \emph{polytope}, i.e., there exist matrices $S^I$, $S^E$ and vectors $T^I$, $T^E$ of appropriate dimensions such that 
\begin{equation*}\label{polytope_set_dual}
\upol=\left\{\zeta \in \mathcal B  \mid S^I \, \zeta \leq T^I,\,\, S^E \zeta= T^E \right\}.
\end{equation*}
We will refer to coherent risk measures representable with a polytopic risk envelope as \emph{polytopic risk measures}. Consider the bijective map $q(j):=p(j) \zeta(j)$ (recall that, in our model, $p(j)>0$). Then, by applying such map, one can easily rewrite a polytopic  risk measure as 
\[
\risk(Z)=\max_{q\in \upol} \mathbb E_{q}[Z],
\]
where $q$ is a \emph{probability mass function} belonging to a polytopic subset of the standard simplex, i.e.:
\begin{equation}\label{eq:rep_fin}
\upol= \Bigl \{ q\in \Delta^L \mid  S^I  q \leq T^I,\,\, S^E q= T^E  \Bigr  \},
\end{equation}
where $\Delta^L:=\bigl \{ q\in \reals^L: \sum_{j=1}^L \, q(j)=1, q \geq 0 \bigr\}$. Accordingly, one has $E_{q}[Z] = \sum_{j=1}^L \, Z(j) q(j)$ (note that, with a slight abuse of notation, we are using the same symbols as before for $\upol$, $S^I$, and $S^E$). 

The class of polytopic risk measures is large: we give below some examples (also note that any comonotonic risk measure is a polytopic  risk measure \cite{Iancu_11}).

\begin{example}(Examples of Polytopic Risk Measures)\label{example_expectation}
The expected value of a random variable $Z$  can be represented according to equation \eqref{eq:rep_fin} with the polytopic risk envelope 
$\upol =\bigl \{q\in \Delta^L  \mid q(j) = p(j) \quad \text{for all}\quad  j\in \{1,\ldots, L\}  \bigr\}$.


A second example is represented by the average upper semi-deviation risk metric, defined as
\[
\risk_{\mathrm{AUS}}(Z):=\expectation{Z}+c\, \mathbb{E}\bigl[(Z-\expectation{Z})^+\bigr],
\]
where $0\leq c\leq 1$. This metric can be represented according to equation \eqref{eq:rep_fin} with polytopic risk envelope (\cite{Ogryczak_Ruszczynski_99, Shapiro_Dentcheva_Ruszczynski_09}):
$\upol =\bigl\{q\in \Delta^L\mid q(j)=p(j)  \bigl(1+h(j)-\sum_{j=1}^L h(j) p(j) \bigr),
 0\leq h(j)\leq c,\,\, j\in\{1,\ldots,L\} \bigr\}$.

A third example is represented by the worst case risk, defined as
\[
\text{WCR}(Z):=\max\bigl\{Z(j):\,\, j\in\{1,\ldots,L\}\bigr\}.
\]
Such risk metric can be trivially represented according to equation \eqref{eq:rep_fin} with polytopic risk envelope $\upol = \Delta^L$.
\end{example}

Other important examples include the Conditional Value-at-Risk \cite{Rockafellar_Uryasev_00}, mean absolute semi-deviation \cite{Ogryczak_Ruszczynski_99}, the spectral risk measures  \cite{Bertsimas_Brown_09, Iancu_11},  the  optimized certainty equivalent and expected utility  \cite{Ben-Tal_07, Shapiro_Dentcheva_Ruszczynski_09, Eichhorn_05}, and the distributionally-robust risk  \cite{Bernardini_Bemporad_12}.  The key point is that the notion of polytopic risk metric  \emph{covers a full gamut of risk assessments}, ranging from risk-neutral to worst case. 
 

\subsection{Markov dynamic polytopic risk metrics}

Note  that in the definition of dynamic, time-consistent risk measures, since at stage $k$ the value of $\risk_k$ is $\fil_k$-measurable, the evaluation of risk can depend on the \emph{whole} past, see \cite[Section IV]{rus_09}. For example, the weight $c$ in the definition of the average upper mean semi-deviation risk metric  can be an $\fil_k$-measurable random variable (see \cite[Example 2]{rus_09}). This generality, which appears of little practical value in many cases, leads to optimization problems that are intractable. This motivates us to add a \emph{Markovian structure} to dynamic, time-consistent risk measures (similarly as in \cite{rus_09}). We start by introducing the notion of Markov polytopic risk measure (similar to \cite[Definition 6]{rus_09}).

\begin{definition}[Markov Polytopic Risk Measures]\label{def:Markov}
Consider the Markov process $\{x_k\}$ that evolves according to equation \eqref{eqn_sys}.  A coherent one-step conditional risk measure $\risk_k(\cdot)$ is a Markov polytopic risk measure with respect to $\{x_k\}$ if it can be written as
\[
\begin{split}
\risk_k(Z(x_{k+1}))=\max_{q\in\upol_{k}(x_k,p)}\mathbb{E}_{q}[Z(x_{k+1})]
\end{split}
\]
where
\[\begin{split}
 \upol_{k}(x_k, p)=
 \left\{q\in \Delta^L\mid 
 \begin{array}{l}
 S^I_{k}(x_k, p)q\leq T^I_{k}(x_k, p), \\
 S^E_{k}(x_k, p)q= T^E_{k}(x_k, p)
 \end{array}\!\!\right\}.
\end{split} \]
\end{definition}
In other words, a Markov polytopic risk measure is a coherent one-step conditional risk measure where the
evaluation of risk is not allowed to depend on the whole past (for example, the weight $c$  in the definition of the average upper mean semi-deviation risk metric  can depend on the past only through $x_k$), and the risk envelope is a polytope. Correspondingly, we define a Markov dynamic polytopic risk metric as follows.

\begin{definition}[Markov Dynamic Polytopic Risk Measures]\label{def:dyn_Mar}
Consider the Markov process $\{x_k\}$ that evolves according to equation \eqref{eqn_sys}. A Markov dynamic polytopic risk measure is a set of mappings $\risk_{k,N}:\cs_{k, N}\rightarrow\cs_k$ defined as
\begin{equation*}
\begin{split}
\risk_{k,N} = &Z(x_k) + \risk_k(Z(x_{k+1}) +\ldots+\\&
\qquad \risk_{N-2}(Z(x_{N-1})+\risk_{N-1}(Z(x_N)))\ldots)),
\end{split}
\end{equation*}
for $k\in\{0,\ldots,N\}$, where the single-period risk measures $\risk_k$ are Markov polytopic risk measures.
\end{definition}
Clearly, a Markov dynamic polytopic risk metric is time consistent. Definition \ref{def:dyn_Mar} can be extended to the case where the probability distribution for the disturbance depends on the current state and control action.  We avoid this generalization to keep the exposition simple and consistent with model \eqref{eqn_sys}.

\section{Problem Formulation}\label{sec:IHC}
In light of Sections \ref{sec:sys} and \ref{sec:risk}, we are now in a position to state the risk-averse MPC problem we wish to solve in this paper.  Our problem formulation relies on Markov dynamic polytopic risk metrics that satisfy the following stationarity assumption.
\begin{assumption}(Time-Invariance of Risk)\label{assume_tractable}
The polytopic risk envelopes $\upol_k$ do not depend explicitly on time and are independent of the state $x_k$, i.e. $\upol_k(x_k, p) = \upol(p)$  for all $k$. 
\end{assumption}
This assumption is crucial for the well-posedness of our formulation and  in order to devise  a tractable MPC algorithm that relies on linear matrix inequalities.  
We next  introduce a notion of stability tailored to our risk-averse context.
\begin{definition}[Uniform Global Risk-Sensitive Exponential Stabilty]\label{stoch_stab_defn_exp}
System \eqref{eqn_sys} is said to be Uniformly Globally Risk-Sensitive Exponentially Stable (UGRSES) if there exist constants $c\geq0$ and $\lambda\in[0,1)$ such that for all initial conditions $x_0\in\reals^{N_x}$,
\begin{equation}
\risk_{0,k}(0,\ldots,0,x_k^Tx_k)\leq c\, \lambda^k \, x_0^Tx_0,\quad \text{for all } k\in\naturals,\label{stab_ineq}
\end{equation}
where $\{\risk_{0,k}\}$ is a  Markov dynamic  polytopic risk measure satisfying Assumption \ref{assume_tractable}.
\end{definition}
One can easily show that UGRSES is a \emph{more restrictive} stability condition than mean-square stability (considered, for example, in \cite{Bernardini_Bemporad_12}).

Consider the MDP described in Section  \ref{sec:sys} and let $\Pi$ be the set of all stationary feedback control policies, i.e., $\Pi := \Bigl \{ \pi: \reals^{N_x} \rightarrow \reals^{N_u}\}$. Consider the quadratic cost function $c:\reals^{N_x}\times \reals^{N_u}\rightarrow \reals_{\geq 0}$ defined as 
\[
c(x,u):=x^T\,Q\,x+u^T\,R\,u,
\]
where $Q=Q^T\succ 0$ and $R=R^T\succ 0$ are given state and control penalties.
%
The problem we wish to address is as follows.
\begin{quote} {\bf Optimization Problem $\mathcal{OPT}$} --- Given an initial state $x_0\in \reals^{N_x}$, solve
\begin{alignat*}{2}
\inf_{\pi\in\Pi} & & \quad&\limsup_{k\rightarrow\infty}J_{0,k}(x_0,\pi)\\
\text{such that} & &\quad & x_{k+1}=A(w_k)x_{k}+B(w_k)\pi(x_k)\\
& &\quad&\text{System is UGRSES}\end{alignat*}
where
\[
J_{0,k}(x_0,\pi)=\risk_{0,k}\Bigl(c(x_0,\pi(x_0)),\ldots,c(x_{k},\pi(x_{k})) \Bigr),
\]
and $\{\risk_{0,k}\}$ is a Markov dynamic polytopic risk measure satisfying Assumption \ref{assume_tractable}.
\end{quote}
We will denote the optimal cost function as $J^{\ast}_\infty(x_0)$. Note that the risk measure in the definition of  UGRSES is assumed to be identical to the risk measure used to evaluate the  cost of a policy.  Also, note that, by Assumption \ref{assume_tractable}, the single-period risk metrics are time-invariant, hence one can write
\begin{equation}\label{eq:mar_pol_inv}
\begin{split}
&\risk_{0,k}\Bigl(c(x_0,\pi(x_0)),\ldots,c(x_{k},\pi(x_{k})) \Bigr) =c(x_0, \pi(x_0)) \\&\quad + \risk(c(x_1, \pi(x_1)) +\ldots+\risk(c(x_k, \pi(x_k)))\ldots),
\end{split}
\end{equation}
where $\risk$ is a  given Markov polytopic risk metric that models the ``amount" of risk aversion.  This paper addresses problem $\mathcal{OPT}$ along two main dimensions:
\begin{enumerate}
\item Find sufficient conditions for \emph{risk-sensitive} stability (i.e., for UGRSES).
\item Design a model predictive control algorithm to efficiently compute a suboptimal state-feedback control policy. 
\end{enumerate}

\section{Risk-Sensitive Stability}\label{sec:stab}
In this section we provide a sufficient condition for system \eqref{eqn_sys} to be UGRSES, under the assumptions of Section \ref{sec:IHC}. This condition relies on Lyapunov techniques and is inspired by \cite{Bernardini_Bemporad_12} (Lemma \ref{lyap_stab} indeed reduces to Lemma 1 in \cite{Bernardini_Bemporad_12} when the risk measure is simply an expectation).
\begin{lemma}\label{lyap_stab}
Consider a policy $\pi \in \Pi$ and the corresponding closed-loop dynamics for system \eqref{eqn_sys}, denoted by $x_{k+1}=f(x_k,w_k)$. The closed-loop system is UGRSES if there exists a function $V(x): \reals^{N_x}\rightarrow\reals$ and scalars $b_1,b_2,b_3>0$ such that for all $x\in\reals^{N_x}$ the following conditions hold:
\begin{equation}
\begin{split}
&b_1\, \|x\|^2\leq V(x)\leq b_2\|x\|^2,\\
& \risk(V(f(x, w)))-V(x)\leq -b_3\|x\|^2.\label{eqn_RSES}
\end{split}
\end{equation}
\end{lemma}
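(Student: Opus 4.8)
The plan is to convert the two Lyapunov conditions into an exponential bound on the $k$-fold composition of the one-step risk metric, and then to read off the UGRSES constants $c$ and $\lambda$. The main idea is that, under Assumption \ref{assume_tractable} and the Markov structure of $\risk$, the quantity $\risk_{0,k}(0,\ldots,0,x_k^Tx_k)$ collapses --- via the compositional form \eqref{eq:mar_pol_inv} with all but the last stage cost equal to zero --- into the $k$-fold nested composition $\risk(\risk(\cdots\risk(\|x_k\|^2)\cdots))$, so the whole problem reduces to propagating a single one-step contraction through $k$ applications of $\risk$.

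First I would record two elementary consequences of the coherence axioms that are used repeatedly: positive homogeneity (A4) gives $\risk(0)=0$, and together with monotonicity (A2) this yields $\risk(Z)\geq 0$ whenever $Z\geq 0$. Since $V\geq b_1\|\cdot\|^2\geq 0$, applying this to the second Lyapunov inequality forces $0\leq \risk(V(f(x,w)))\leq V(x)-b_3\|x\|^2$, whence $b_3\|x\|^2\leq V(x)\leq b_2\|x\|^2$ and in particular $b_3\leq b_2$. I then set $\lambda:=1-b_3/b_2$, which therefore lies in $[0,1)$, and combine the decrease condition with $\|x\|^2\geq V(x)/b_2$ to obtain the one-step contraction $\risk(V(f(x,w)))\leq \lambda\, V(x)$ for every $x$.

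The core step is an induction showing that the nested composition contracts geometrically. Defining $G_0:=V$ and $G_{j+1}(x):=\risk(G_j(f(x,w)))$, I would prove $G_j(x)\leq \lambda^j V(x)$ for all $x$ and all $j$: the base case is immediate, and the inductive step chains three facts in order --- monotonicity (to push the bound $G_j(f(x,w))\leq\lambda^j V(f(x,w))$ through $\risk$), positive homogeneity (to extract the scalar $\lambda^j$), and the one-step contraction just derived. The Markov structure in Definition \ref{def:Markov} is exactly what makes each $G_j$ a genuine function of the current state alone, so that this layer-by-layer argument is well posed; this bookkeeping --- keeping track of which state each conditional risk is measured against, and verifying that the bound survives the $\max_{q\in\upol}\mathbb{E}_q[\cdot]$ at each layer --- is the part I expect to require the most care.

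Finally I would assemble the pieces. Expanding $\risk_{0,k}(0,\ldots,0,\|x_k\|^2)$ as the $k$-fold composition and using $\|x_k\|^2\leq V(x_k)/b_1$ together with monotonicity and positive homogeneity, I bound it above by $\tfrac{1}{b_1}G_k(x_0)$; the induction gives $G_k(x_0)\leq\lambda^k V(x_0)$, and the quadratic bounds on $V$ then yield $\risk_{0,k}(0,\ldots,0,x_k^Tx_k)\leq \tfrac{b_2}{b_1}\,\lambda^k\,x_0^Tx_0$. Choosing $c:=b_2/b_1$ and $\lambda:=1-b_3/b_2$ establishes \eqref{stab_ineq}, i.e. UGRSES.
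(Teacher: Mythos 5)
Your proposal is correct and follows essentially the same route as the paper's proof: both establish $b_3\leq b_2$ from nonnegativity of the risk of a nonnegative cost, derive the one-step contraction $\risk(V(f(x,w)))\leq(1-b_3/b_2)V(x)$, propagate it through the nested composition $\risk_{0,k}(0,\ldots,0,\cdot)$ by induction using monotonicity and positive homogeneity, and finish with the quadratic sandwich bounds to get $c=b_2/b_1$ and $\lambda=1-b_3/b_2$. Your explicit state-space recursion $G_{j+1}(x)=\risk(G_j(f(x,w)))$ is just a cleaner bookkeeping device for the same induction the paper performs directly on $\risk_{0,k}(0,\ldots,0,V(x_k))$.
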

\begin{proof}
From the time consistency, monotonicity, translational invariance, and positive homogeneity of Markov dynamic polytopic risk measures, condition \eqref{eqn_RSES} implies
$b_1\,\,  \risk_{0,k+1}(0,\ldots,0,\|x_{k+1}\|^2)
 \leq\risk_{0,k+1}(0,\ldots,0, V(x_{k+1}))
 =\risk_{0,k}(0,\ldots,0,V(x_k)+\risk( V(x_{k+1})-V(x_k)))
 \leq\risk_{0,k}(0,\ldots,0,V(x_k)-b_3\|x_k\|^2)
 \leq\risk_{0,k}(0,\ldots,0,(b_2-b_3)\|x_k\|^2)$. Also, since $\risk_{0,k+1}$ is monotonic, one has $\risk_{0,k+1}(0,\ldots,0,\|x_{k+1}\|^2)\geq 0$, which implies $b_2\geq b_3$ and in turn $(1-b_3/b_2)\in[0,1)$. 
 
Since $V(x_k)/b_2\leq \|x_k\|^2$, and using the previous inequalities, one can write:
\begin{equation*}\small
\begin{split}
\risk_{0,k+1}(0,\ldots,0, V(x_{k+1})) \leq& \risk_{0,k}(0,\ldots,0,V(x_k)-b_3\|x_k\|^2)\\
\leq& \left(1\!-\!\frac{b_3}{b_2}\right)\risk_{0,k}\left(0,\ldots,0,V(x_k)\right).
\end{split}
\end{equation*}
Repeating this bounding process, one obtains:
\begin{equation*}\small
\begin{split}
&\risk_{0,k+1}(0,\ldots,0,V(x_{k+1}))\leq\left(1-\frac{b_3}{b_2}\right)^{k}\risk_{0,1}\left(V(x_1)\right)\\
&\quad \, \, =\left(1-\frac{b_3}{b_2}\right)^{k} \risk \left(V(x_1)\right)\leq \left(1-\frac{b_3}{b_2}\right)^{k} \left(V(x_0) - b_3\|x_0\|^2 \right)\\
&\quad\, \,  \leq\, b_2\left(1-\frac{b_3}{b_2}\right)^{k+1} \, \|x_0\|^2.
\end{split}
\end{equation*}
Again, by monotonicity, the above result implies
\[
\begin{split}
\risk_{0,k+1}(0,\ldots,0,x_{k+1}^Tx_{k+1})\leq&\frac{b_2}{ b_1}\left(1-\frac{b_3}{b_2}\right)^{k+1}x_0^Tx_0.
\end{split}
\]
By setting $c=b_2/ b_1$ and $\lambda=(1-b_3/b_2)\in[0,1)$,  the claim is proven.
\end{proof}

\section{Model Predictive Control Problem}\label{sec:MPC}
In this section we set up the receding horizon version of problem $\mathcal{OPT}$. This will lead to a  model predictive control algorithm for the (suboptimal) solution of problem $\mathcal{OPT}$. Consider the following receding-horizon cost function for $N\geq 1$:

\begin{equation}\label{cost_MPC}\small
\begin{split}
&J(x_{k|k},\pi_{k|k},\ldots,\pi_{k+N-1|k},P) :=\risk_{k,k+N}\big(c(x_{k|k},\pi_{k|k}(x_{k|k})),\\
&\ldots,c(x_{k+N-1|k},\pi_{k+N-1|k}(x_{k+N-1|k}), x_{k+N}^T Px_{k+N}\big),
\end{split}
\end{equation}
where $x_{h|k}$ is the state at time $h$ predicted at stage $k$ (a \emph{discrete} random variable), $\pi_{h|k}$ is the control \emph{policy} to be applied at time $h$ as determined at stage $k$ (i.e., $\pi_{h|k}:\reals^{N_x}\rightarrow \reals^{N_u}$), and $P=P^T\succ 0$ is a terminal weight matrix. We are now in a position  to  state the model predictive control problem.
\begin{quote} {\bf Optimization problem $\mathcal{MPC}$} --- Given an initial state $x_{k|k}\in  \reals^{N_x}$ and a prediction horizon $N\geq 1$, solve
\begin{alignat*}{2}
\min_{\pi_{k|k},\ldots,\pi_{k+N-1|k}}& & \quad&\!J\left(x_{k|k},\pi_{k|k},\ldots,\pi_{k+N-1|k},P\right) \\
\text{such that} & &\!\quad & x_{k+h+1|k}=A(w_{k+h})x_{k+h|k}+\\
 & & \quad & \qquad   \qquad B(w_{k+h})\pi_{k+h|k}(x_{k+h|k})
\end{alignat*}
for $h\in \{0, \ldots, N-1\}$.
\end{quote}
Note that a Markov policy is guaranteed to be optimal for problem $\mathcal{MPC}$ (see \cite[Theorem 2]{rus_09}). The optimal cost function for problem $\mathcal{MPC}$ is denoted by $J^{*}_k(x_{k|k})$, and the set of minimizers is denoted by $\{\pi^\ast_{k|k}, \ldots, \pi^\ast_{k+N-1|k}\}$. For each state $x_k$, we set $x_{k|k} = x_k$ and the (time-invariant) model predictive control law is then defined as 
\begin{equation}\label{MPC_law}
\begin{split}
 \pi^{MPC}(x_k)=&\pi^\ast_{k|k}(x_{k|k}).
\end{split}
\end{equation}
Note that the model predictive control problem $\mathcal{MPC}$ involves an optimization over \emph{time-varying closed-loop policies}, as opposed to the classical deterministic case where the optimization is over open-loop sequences. A similar approach is taken in \cite{Primbs_Sung_09, Bernardini_Bemporad_12}. We will show in Section \ref{sec:alg} how to solve problem $\mathcal{MPC}$ efficiently.

The following theorem shows that the model predictive control law \eqref{MPC_law}, with a proper choice of the terminal weight $P$, is risk-sensitive stabilizing, i.e., the closed-loop system \eqref{eqn_sys} is UGRSES.

\begin{theorem} (Stochastic Stability for Model Predictive Control Law)\label{stoch_stab_MPC}
Consider the model predictive control law in equation \eqref{MPC_law} and the corresponding closed-loop dynamics for system \eqref{eqn_sys} with initial condition $x_{0}\in \reals^{N_x}$. Suppose  that $P=P^T\succ 0$, and there exists a matrix $F$ such that:
\begin{equation}\label{term_ineq}
\sum_{j=1}^L q_{l}(j)\, (A_{j}+B_{j}F)^TP(A_j+B_{j}F)-P+(F^TRF+Q)\prec 0,
\end{equation}
for all $l\in\{1,\ldots,\mathrm{cardinality}(\upolv(p))\}$, where $\upolv(p)$ is the set of vertices of polytope $\upol(p)$, $q_l$ is the $l$th element in set $\upolv(p)$, and $q_l(j)$ denotes the $j$th component of vector $q_l$, $j\in\{1\, \ldots, L\}$.  Then, the closed loop system \eqref{eqn_sys} is UGRSES. 
\end{theorem}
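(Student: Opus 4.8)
The plan is to use the optimal value function of problem $\mathcal{MPC}$ as a Lyapunov function and to invoke Lemma \ref{lyap_stab}. By Assumption \ref{assume_tractable} the per-stage risk metric is time-invariant, so the optimal MPC cost $J^*_k(x_{k|k})$ depends only on the state and not on $k$; I would therefore set $V(x):=J^*_k(x)$ and verify the two hypotheses of Lemma \ref{lyap_stab}, namely the quadratic sandwich bounds and the one-step risk decrease. A preliminary observation used throughout is that $\mathbb{E}_q[\cdot]$ is linear in $q$, so the maximum defining a Markov polytopic risk measure is attained at a vertex of $\upol(p)$. Consequently, requiring \eqref{term_ineq} at \emph{every} vertex $q_l\in\upolv(p)$ is exactly equivalent to the control-Lyapunov-type bound $\risk\big(\bar x^T P\bar x\big)\le x^T Px - c(x,Fx)$ for all $x$, where $\bar x=(A(w)+B(w)F)x$ is the successor state under the terminal controller $u=Fx$.

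For the sandwich bounds: by translation invariance and positive homogeneity $\risk(0)=0$, so monotonicity makes the risk of any nonnegative cost nonnegative; peeling off the first stage gives $V(x)\ge c(x,\pi^*_{k|k}(x))\ge x^T Qx\ge \lambda_{\min}(Q)\|x\|^2$, so I take $b_1=\lambda_{\min}(Q)$. For the upper bound I would exhibit the stationary terminal policy $u=Fx$ as a feasible (suboptimal) choice over the whole horizon; applying the vertex form of \eqref{term_ineq} at the terminal stage and propagating it inward through the nested, \emph{monotone} one-step maps yields $V(x)\le J(x,Fx,\dots,Fx,P)\le x^T Px\le \lambda_{\max}(P)\|x\|^2$, so $b_2=\lambda_{\max}(P)$.

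For the decrease condition I would use the standard receding-horizon shift. Write $V(x_k)=c(x_k,\pi^*_{k|k}(x_k))+\risk\big(G(x_{k+1})\big)$, where $G(x_{k+1})$ is the optimal cost-to-go generated by $\pi^*_{k+1|k},\dots,\pi^*_{k+N-1|k}$ with terminal weight $P$. At the successor state $x_{k+1}=f(x_k,w_k)$ the shifted policy $\{\pi^*_{k+1|k},\dots,\pi^*_{k+N-1|k},F\}$ is feasible for the horizon-$N$ problem, so $V(x_{k+1})\le \tilde J(x_{k+1})$. The nested compositions $\tilde J(x_{k+1})$ and $G(x_{k+1})$ share identical outer layers (stages $k+1,\dots,k+N-1$) and differ only in the innermost argument, where $G$ carries $x_{k+N}^T Px_{k+N}$ while $\tilde J$ carries $c(x_{k+N},Fx_{k+N})+\risk(x_{k+N+1}^T Px_{k+N+1})$. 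The vertex form of \eqref{term_ineq} bounds the latter pointwise by the former, and monotonicity of every intervening one-step map propagates this to $\tilde J(x_{k+1})\le G(x_{k+1})$, hence $V(x_{k+1})\le G(x_{k+1})$ pointwise in $w_k$. Applying the monotone one-step map $\risk$ and using $\risk\big(G(x_{k+1})\big)=V(x_k)-c(x_k,\pi^*_{k|k}(x_k))$ gives $\risk\big(V(f(x_k,w_k))\big)-V(x_k)\le -c(x_k,\pi^*_{k|k}(x_k))\le -\lambda_{\min}(Q)\|x_k\|^2$, i.e. the decrease bound with $b_3=\lambda_{\min}(Q)$; Lemma \ref{lyap_stab} then delivers UGRSES.

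The delicate step, on which I would spend the most care, is the decrease condition: the bookkeeping that legitimizes comparing the two nested compositions $\tilde J$ and $G$ and propagating a single pointwise inequality at the terminal stage outward through all intermediate maps. This requires invoking monotonicity together with the compositional form guaranteed by Theorem \ref{thrm:tcc} at each layer, noting that only the innermost argument changes so the comparison is preserved regardless of the adversary's worst-case density in the outer layers. The reason \eqref{term_ineq} is stated at every vertex $q_l$ rather than for a single $q$ is precisely this: it is what upgrades the scalar matrix inequality into the risk-measure inequality $\risk(x_{k+N+1}^T Px_{k+N+1})\le x_{k+N}^T Px_{k+N}-c(x_{k+N},Fx_{k+N})$ \emph{uniformly} over $\upol(p)$, which is exactly what both the upper bound and the decrease step consume.
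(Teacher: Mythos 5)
Your proof is correct, and its core is the paper's own argument: take $V=J^*_k$ (time-invariant by Assumption \ref{assume_tractable}), convert the vertex-wise LMI \eqref{term_ineq} into the one-step risk inequality $\risk(\bar{x}^TP\bar{x})\le x^TPx-c(x,Fx)$ (where $\bar{x}=(A(w)+B(w)F)x$) using linearity of $\mathbb{E}_q[\cdot]$ in $q$ over the polytope $\upol(p)$, and establish the decrease condition by the receding-horizon shift --- append the terminal controller $F$ to the tail of the optimal policy, compare the two nested compositions that differ only in their innermost argument, and propagate the pointwise terminal inequality outward through the monotone one-step maps. The paper's chain of inequalities \eqref{eq:bottom} is precisely your $\tilde{J}$-versus-$G$ comparison, written out by adding and subtracting the terms $Z_{k+N}$ and $\risk(Z_{k+N+1})$ inside the composition; your lower bound $b_1=\lambda_{\min}(Q)$ also coincides with the paper's \eqref{eq:low}. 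Where you genuinely depart from the paper is the upper sandwich bound: you roll out the terminal controller $u=Fx$ over the whole horizon and telescope the terminal inequality inward, obtaining $V(x)\le x^TPx$ and hence $b_2=\lambda_{\max}(P)$, whereas the paper evaluates the suboptimal policy $u\equiv 0$, which requires the extra constant $M_A$ (worst-case norm of products of the $A_j$'s) and yields $b_2=\left(N\|Q\|_2+\|P\|_2\right)M_A$. Your variant is tighter and arguably cleaner, since it reuses the very inequality that drives the decrease step rather than a separate open-loop growth estimate; the paper's variant has the marginal feature of not invoking the terminal LMI for the sandwich bounds, but both are rigorous. One small caution: \eqref{term_ineq} is a strict matrix inequality, so your claimed ``exact equivalence'' with the risk inequality should be softened to an implication (vertex LMIs imply the non-strict risk bound), which is the only direction the proof actually uses.
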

\begin{proof}
The strategy of this proof is to show that $J^*_k$ is a valid Lyapunov function in the sense of Lemma \ref{lyap_stab}. Specifically, we want to show that $J^*_k$ satisfies the two inequalities in equation \eqref{eqn_RSES}; the claim then follows by simply noting that, in our time-invariant setup,  $J^*_k$ does not depend on $k$.

We start by focusing on the bottom inequality in equation \eqref{eqn_RSES}. Consider a time $k$ and an initial condition $x_{k|k}\in \reals^{N_x}$ for problem $\mathcal{MPC}$. The sequence of optimal control policies  is given by $\{\pi^*_{k+h|k} \}_{h=0}^{N-1}$. Let us define a sequence of control policies from time $k+1$ to $N$ according to
\begin{equation}\label{ctrl_seq_MI}
\pi_{k+h|k+1}(x_{k+h|k}):=\!\left\{\!\begin{array}{ll}
 \pi^*_{k+h|k}(x_{k+h|k}) &  \text{if $h\in[1,N-1]$},\\
 F\, x_{k+N|k} & \text{if $h=N$}.\\
 \end{array}\right.
 \end{equation}
 This sequence of control policies is essentially the concatenation of the sequence  $\{\pi^*_{k+h|k} \}_{h=1}^{N-1}$ with a linear feedback control law for stage $N$ (the reason why we refer to this policy with the subscript  ``$k+h|k+1$" is that we will use this policy as a feasible policy for problem $\mathcal{MPC}$ starting at stage $k+1$). 
 
Consider the $\mathcal{MPC}$ problem at stage $k+1$ with initial condition given by $x_{k+1|k+1}=A(w_k)x_{k|k}+B(w_k)\pi_{k|k}^\ast(x_{k|k})$, and denote with $\overline{J}_{k+1}(x_{k+1|k+1})$ the cost of the objective function assuming that the sequence of control policies is given by $\{\pi_{k+h|k+1}\}_{h=1}^N$. Note that  $x_{k+1|k+1}$ (and therefore $\overline{J}_{k+1}(x_{k+1|k+1})$) is a random variable with $L$ possible realizations. Define:
\begin{equation*}\small
\begin{split}
&Z_{k+N}:=-x_{k+N|k}^TP x_{k+N|k}+x_{k+N|k}^TQ x_{k+N|k}\\
&\qquad\qquad \qquad+(F\, x_{k+N|k})^T R\, F\, x_{k+N|k},\\
&Z_{k+N+1}:=\Bigl( (A(w_{k+N|k}) + B(w_{k+N|k})F) x_{k+N|k} \Bigr)^T\cdot\\
&\qquad \qquad \,P \,  \Bigl( (A(w_{k+N|k}) + B(w_{k+N|k})F) x_{k+N|k} \Bigr).
\end{split}
\end{equation*}
By exploiting the dual representation of Markov polytopic risk metrics, one can easily show that equation \eqref{term_ineq} implies
\begin{equation}\label{eq:ineq_key}
Z_{k+N}+\risk(Z_{k+N+1})\leq 0.
\end{equation}
One can then write the following chain of inequalities:
\begin{equation}\label{eq:bottom}\small
\begin{split}
J^{*}_k&(x_{k|k})=x_{k|k}^TQ x_{k|k}\!+\!(\pi_{k|k}^\ast(x_{k|k}))^TR \pi_{k|k}^\ast(x_{k|k})+\\
& \risk\Biggl(\risk_{k+1, N}\Bigl(c(x_{k+1|k}, \pi^*_{k+1}(x_{k+1|k})), \ldots,  x_{k+N|k}^TQ x_{k+N|k}      +       \\
&(F x_{k+N|k})^T R F x_{k+N|k}\! +\!\risk(Z_{k+N+1}) \!-\! Z_{k+N} \!- \!\risk({Z_{k+N+1}}) \!\Bigr) \!\!\Biggr)\! \!\geq\\
&x_{k|k}^TQ x_{k|k}\!+\!(\pi_{k|k}^\ast(x_{k|k}))^TR \pi_{k|k}^\ast(x_{k|k})+\\
& \risk\Biggl(\risk_{k+1, N}\Bigl(c(x_{k+1|k}, \pi^*_{k+1}(x_{k+1|k})), \ldots,  x_{k+N|k}^TQ x_{k+N|k}      +       \\
&(F\, x_{k+N|k})^T R\, F\, x_{k+N|k} +\risk(Z_{k+N+1}) \Bigr) \Biggr)=\\
&x_{k|k}^TQ x_{k|k}\!+\!(\pi_{k|k}^\ast(x_{k|k}))^TR \pi_{k|k}^\ast(x_{k|k})\!+\!\risk\Bigl({\overline{J}_{k+1}(x_{k+1|k+1})}\Bigr)\!\geq\\
&x_{k|k}^TQ x_{k|k}\!+\!(\pi_{k|k}^\ast(x_{k|k}))^TR \pi_{k|k}^\ast(x_{k|k})+\risk\Bigl(J^*_{k+1}(x_{k+1|k+1})\Bigr),
\end{split}
\end{equation}
where the first equality follows from our definitions, the second inequality follows from equation \eqref{eq:ineq_key}  and the monotonicity property of Markov polytopic risk metrics (see also \cite[Page 242]{rus_09}), the third equality 
follows from the fact that the sequence of control policies $\{\pi_{k+h|k+1}\}_{h=1}^N$ is a feasible sequence for the $\mathcal{MPC}$ problem starting at stage $k+1$ with initial condition $x_{k+1|k+1}=A(w_k)x_{k|k}+B(w_k)\pi_{k|k}^\ast(x_{k|k})$, and the fourth inequality follows form the definition of $J^*_{k+1}$ and the monotonicity of Markov polytopic risk metrics.

We now turn our focus to the top inequality in equation \eqref{eqn_RSES}. One can easily bound $J^{*}_k(x_{k|k})$ from below according to:
\begin{equation}\label{eq:low}
J^{*}_k(x_{k|k})\geq x_{k|k}^T Q x_{k|k}\geq \lambda_{\min}(Q)\|x_{k|k}\|^2,
\end{equation}
where $\lambda_{\min}(Q)>0$ by assumption.  To bound $J^{*}_k(x_{k|k})$ from above, define:
\[
\begin{split}
& M_A:=\max_{r\in\{0,\ldots,N-1\}}\max_{j_0,\ldots,j_{r}\in\{1,\ldots,L\}}\|A_{j_{r}}\ldots A_{j_{1}}A_{j_0}\|_2.
\end{split}
\] 
One can write:
\begin{equation*}\small
\begin{split}
&J^{*}_k(x_{k|k})\leq c(x_{k|k},0)+\risk\Bigl( c(x_{k+1|k},0)+ \risk\Bigl( c(x_{k+2|k},0)\\
&\quad\qquad\qquad+\ldots+\risk\left(x_{k+N|k}^TP x_{k+N|k}\right)\ldots\Bigr)\Bigr)\\
\leq&\|Q \|_2\|x_{k|k}\|_2^2 +\risk\Bigl( \|Q \|_2\|x_{k+1|k}\|_2^2+\risk\Bigl(\|Q \|_2\|x_{k+2|k}\|_2^2+\\
&\qquad\ldots+\risk\left( \|P\|_2\|x_{k+N}\|^2_2\right)\ldots\Bigr)\Bigr).
\end{split}
\end{equation*}
Therefore, by using the translational invariance and monotonicity property of Markov polytopic risk metrics, one obtains the upper bound:
\begin{equation}\label{eq:up}
0\leq J^{*}_k(x_{k|k})\leq \left(N\, \|Q \|_2+ \|P\|_2\right )M_A\|x_{k|k}\|_2^2.
\end{equation}
Combining the results in equations \eqref{eq:bottom}, \eqref{eq:low}, \eqref{eq:up}, and given the time-invariance of our problem setup, one concludes that $J^{*}_k(x_{k|k})$ is a ``risk-sensitive" Lyapunov function for the closed-loop system \eqref{eqn_sys}, in the sense of Lemma \ref{lyap_stab}. This concludes the proof.
\end{proof}
The study of the conservatism of the LMI condition \eqref{term_ineq} is left for future work. We note here that out of 100 \emph{randomly} generated 5-state-2-action-3-scenario examples, 83 of them satisfied condition \eqref{term_ineq}.

\section{Solution Algorithms}\label{sec:alg}
In this section we discuss two solution approaches, the first one via dynamic programming, the second one via convex programming.

\subsection{Dynamic programming approach}

Problem $\mathcal{MPC}$ can be solved via dynamic programming, see \cite[Theorem 2]{rus_09}. However, one would first need to find a matrix $P$ that satisfies equation \eqref{term_ineq}. The next theorem presents a linear matrix inequality characterization of condition \eqref{term_ineq}.

%

\begin{theorem}\label{thm_stab_1}
Define $\overline{A}=\begin{bmatrix}
A_1^T&\ldots&A_{L}^T
\end{bmatrix}^T$, $\overline{B}=\begin{bmatrix}
B_{1}^T&\ldots&B_{L}^T
\end{bmatrix}^T$ and $\Sigma_{l}=\text{diag}(q_{l}(1)I,\ldots,q_{l}(L) I)\succeq 0$, for all $q_{l}\in\upolv(p)$. 
Consider the following set of  linear matrix inequalities with respect to decision variables $Y$, $G$, $\overline{Q}=\overline{Q}^T\succ 0$:
\begin{equation} \label{ineq_stab_1}{\small
\begin{split}
&\begin{bmatrix}
I_{L\times L}\otimes\overline{Q}&0&0&-\Sigma^{\frac{1}{2}}_{l}(\overline{A}G+\overline{B}Y)\\
\ast&R^{-1}&0&-Y\\
\ast&\ast&I&-Q^{\frac{1}{2}}G\\
\ast&\ast&\ast&-\overline{Q}+G+G^T\\
\end{bmatrix}\succ 0,\\
\end{split}}
\end{equation}
where  $l\in\{1,\ldots,\mathrm{cardinality}(\upolv(p))\}$. The set of linear matrix inequalities in \eqref{ineq_stab_1} is equivalent to the condition in \eqref{term_ineq} by setting $F=YG^{-1}$ and $P=\overline Q^{-1}$.
\end{theorem}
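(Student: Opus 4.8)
The plan is to show that, for each vertex $q_l \in \upolv(p)$, the matrix inequality \eqref{ineq_stab_1} is exactly a Schur-complement reformulation of \eqref{term_ineq} under the change of variables $F=YG^{-1}$, $P=\overline{Q}^{-1}$. First I would observe that the three leading diagonal blocks of \eqref{ineq_stab_1}, namely $I_{L\times L}\otimes\overline{Q}$, $R^{-1}$ and $I$, are all positive definite whenever $\overline{Q}\succ 0$ and $R\succ 0$. Hence the Schur complement lemma applies, and \eqref{ineq_stab_1}$\,\succ 0$ holds if and only if the Schur complement of these three blocks with respect to the $(4,4)$ block is positive definite. Eliminating the first three block rows and columns, and using $(I_{L\times L}\otimes\overline{Q})^{-1}=I_{L\times L}\otimes P$ together with $\Sigma_l^{1/2}(I_{L\times L}\otimes P)\Sigma_l^{1/2}=\operatorname{diag}(q_l(1)P,\ldots,q_l(L)P)$, this Schur complement is
\[
-\overline{Q}+G+G^T-\sum_{j=1}^L q_l(j)(A_jG+B_jY)^T P\,(A_jG+B_jY)-Y^TRY-G^TQG\succ 0.
\]

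Next I would argue that $G$ is automatically invertible: since the subtracted quadratic terms are positive semidefinite (each $q_l(j)\geq 0$, and $P,R,Q\succ 0$), the displayed inequality forces $G+G^T\succ\overline{Q}\succ 0$, and $G+G^T\succ 0$ precludes a nontrivial kernel of $G$. This legitimizes the substitution $F=YG^{-1}$, i.e. $Y=FG$, under which $A_jG+B_jY=(A_j+B_jF)G$ and $Y^TRY=G^TF^TRFG$. Factoring $G^T(\cdot)G$ out of all three quadratic terms, the Schur complement collapses to $-\overline{Q}+G+G^T-G^T\Psi_l G\succ 0$, where $\Psi_l:=\sum_{j=1}^L q_l(j)(A_j+B_jF)^TP(A_j+B_jF)+F^TRF+Q$, and condition \eqref{term_ineq} reads precisely $\Psi_l\prec P$.

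The crux is then the elementary matrix fact that, for fixed $P=\overline{Q}^{-1}\succ 0$, the existence of an invertible $G$ with $-\overline{Q}+G+G^T-G^T\Psi_l G\succ 0$ is equivalent to $\Psi_l\prec P$. For the forward implication I would complete the square: since $\Psi_l\succ 0$ (because $Q\succ 0$), the inequality $(\Psi_l^{-1}-G)^T\Psi_l(\Psi_l^{-1}-G)\succeq 0$ gives $G^T\Psi_l G\succeq G+G^T-\Psi_l^{-1}$, so the Schur inequality yields $0\prec\Psi_l^{-1}-\overline{Q}$, i.e. $\Psi_l^{-1}\succ P^{-1}$ and hence $\Psi_l\prec P$. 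For the converse, the delicate point is that a single slack matrix $G$ must serve all vertices at once; the uniform choice $G=\overline{Q}=P^{-1}$ does this, since it reduces the Schur complement to $P^{-1}(P-\Psi_l)P^{-1}$, which is positive definite for every $l$ precisely when \eqref{term_ineq} holds, and it sets $Y=FG=FP^{-1}$, closing the loop.

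The step I expect to be the main obstacle is the converse direction of this last equivalence. Because $G$ and $Y$ are decision variables \emph{shared} across all the LMIs indexed by $l$, the per-vertex optimal choice $G=\Psi_l^{-1}$ is inadmissible, and one must verify that the single uniform choice $G=\overline{Q}$ still certifies every vertex inequality simultaneously. The remaining work is bookkeeping: carefully propagating the Kronecker and block-diagonal structure of $\Sigma_l$ and $I_{L\times L}\otimes\overline{Q}$ through the Schur elimination, and confirming the sign conventions so that the $\succ 0$ form \eqref{ineq_stab_1} maps onto the $\prec 0$ condition \eqref{term_ineq}.
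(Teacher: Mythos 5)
Your proof is correct, but it follows a genuinely different route from the paper's: the paper's (one-line) proof invokes the Projection Lemma of \cite[Chapter 2]{Skelton_Iwasaki_Grigoriadis_98} to pass between the slack-variable LMI \eqref{ineq_stab_1} and condition \eqref{term_ineq}, whereas you give a self-contained Schur-complement argument in the style of dilated/slack-variable LMI results. Concretely, you reduce \eqref{ineq_stab_1} by one Schur complement (valid since the leading block $\mathrm{diag}(I_{L\times L}\otimes\overline{Q},\,R^{-1},\,I)$ is positive definite) to $-\overline{Q}+G+G^{T}-G^{T}\Psi_{l}G\succ 0$ with $\Psi_{l}:=\sum_{j=1}^{L}q_{l}(j)(A_{j}+B_{j}F)^{T}P(A_{j}+B_{j}F)+F^{T}RF+Q$, observe that the inequality itself forces $G+G^{T}\succ\overline{Q}\succ 0$ and hence invertibility of $G$ (legitimizing $F=YG^{-1}$), and then close the equivalence in both directions by elementary means: the completion-of-squares bound $G^{T}\Psi_{l}G\succeq G+G^{T}-\Psi_{l}^{-1}$ for the forward implication, and the uniform choice $G=\overline{Q}$, $Y=F\overline{Q}$ for the converse, which collapses the Schur complement to $P^{-1}(P-\Psi_{l})P^{-1}$. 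You also correctly identify and resolve the one delicate point, namely that $G$ and $Y$ are shared across all vertex-indexed LMIs, so the converse cannot use a per-vertex optimal slack; the single choice $G=\overline{Q}$ certifies every vertex simultaneously. What the Projection Lemma buys the paper is brevity and a systematic elimination of the slack variable once that lemma is taken off the shelf; what your argument buys is that it is elementary (no elimination machinery), constructive (it exhibits the explicit certificate $(Y,G,\overline{Q})=(FP^{-1},P^{-1},P^{-1})$), and it makes the multi-vertex sharing issue explicit rather than leaving it implicit in how the lemma is applied. Both are valid proofs of the stated equivalence.
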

\begin{proof}
The theorem can be proven by using the Projection Lemma (see \cite[Chapter 2]{Skelton_Iwasaki_Grigoriadis_98}). The details are omitted in the interest of brevity.
\end{proof}

Hence, a solution approach for $\mathcal{MPC}$ is to first solve the linear matrix inequality in Theorem \ref{thm_stab_1} and then, if a solution for $P$ is found, apply dynamic programming (after state and action \emph{discretization}, see, e.g., \cite{Chow_Pavone_13_2,chow1991optimal}). Note that the discretization process might yield a large-scale dynamic programming problem, which motivates the convex programing approach presented next.

\subsection{Convex programming approach}

Consider the following parameterization of \emph{history-dependent} control policies. Let  $j_0,\ldots,j_{h}\in\{1,\ldots,L\}$ be the realized indices for the disturbances in the first $h+1$ stages of the $\mathcal{MPC}$ problem, where $h\in \{0, \ldots, N-1\}$; for $h\geq 1$, we will refer to the control to be exerted at stage $h$ as $\overline{U}_h(j_0,\ldots,j_{h-1})$. Similarly, we will refer to the state at stage $h$ as $\overline{X}_h(j_0,\ldots,j_{h-1})$. The quantities $\overline{X}_h(j_0,\ldots,j_{h-1})$ and $\overline{U}_h(j_0,\ldots,j_{h-1})$ enable us to keep track of the (exponential) growth of the scenario tree. In terms of this new notation, the system dynamics \eqref{eqn_sys} can be rewritten according to:
\begin{equation}\label{stoch_trans_ALG}
\begin{split}
&\overline{X}_0:=x_{k|k},\,\, \overline{U}_0 \in \reals^{N_u},\\
&\overline{X}_1(j_0)=A_{j_0}\overline{X}_{0}+ B_{j_0}\overline{U}_{0},\,\, \text{for $h=1$},\\
&\overline{X}_h(j_0,\ldots,j_{h-1})=A_{j_{h-1}}\overline{X}_{h-1}(j_0,\ldots,j_{h-2})+\\
&\qquad \qquad B_{j_{h-1}}\overline{U}_{h-1}(j_0,\ldots,j_{h-2}),\,\,\text{for $h\geq 2$}.
 \end{split}
\end{equation}

Indeed, the $\mathcal{MPC}$ problem is defined as an optimization problem over \emph{Markov} control policies. However, in the convex programming approach, we re-define the  $\mathcal{MPC}$ problem as an optimization problem over \emph{history-dependent} policies. One can show (with a virtually identical proof) that the stability Theorem \ref{thm_stab_1} still holds when history-dependent policies are considered. Furthermore, since Markov policies are optimal in our setup (see \cite[Theorem 2]{rus_09}), the value of the optimal cost stays the same. The key advantage of history-dependent policies is that their additional flexibility leads to a convex optimization problem for the determination of the model predictive control law. Specifically, the model predictive control law can be determined according to the following algorithm.

\begin{quote}{\bf Solution for $\mathcal{MPC}$} --- Given an initial state $x_{k|k} = x_k$ and a prediction horizon $N\geq 1$, solve:
\[
\min_{\footnotesize \begin{array}{c}
\gamma_1,W, G_{j_N}, Y_{j_N}, \overline Q, {\gamma}_{2}(j_0,\ldots,j_{N-1})\\
\overline{U}_0,\,\overline{U}_{h}(j_0,\ldots,j_{h-1}),h\in\{1,\ldots,N\}\\
\overline{X}_{h}(j_0,\ldots,j_{h-1}),h\in\{1,\ldots,N\}\\
j_0,\ldots,j_{N-1}\in\{1,\ldots,L\}
\end{array}}\quad \gamma_1
\]
subject to 
\begin{itemize}
\item The linear matrix inequality in equation (\ref{ineq_stab_1}).
\item The linear matrix inequality
\begin{equation*}
\begin{bmatrix}
 {\gamma}_{2}(j_0,\ldots,j_{N-1}) I&\overline{X}_{N}(j_0,\ldots,j_{N-1})^T\\
 \ast&\overline{Q}
\end{bmatrix}\succeq 0.
\end{equation*}
\item The system dynamics in equation \eqref{stoch_trans_ALG}.
\item The objective epigraph constraint:
\[
\begin{split}
\risk_{k,k+N}(c_0(\overline{X}_{0},\overline{U}_{0}),&\ldots,c_{N-1}(\overline{X}_{N-1},\overline{U}_{N-1}),\\
&\gamma_2(j_0,\ldots,j_{N-1}))\leq \gamma_1.
\end{split}
\]
\end{itemize}
Then, set $\pi^{MPC}(x_k) = \overline{U}_{0}$.
\end{quote}

The proof of the correctness of this algorithm is omitted due to lack of space and will be presented in the journal version of this paper. This algorithm is clearly suitable only for ``moderate" values of $N$, given the combinatorial explosion of the scenario tree. Indeed, one could devise a \emph{hybrid} algorithm where the above computation is split into two steps: (1) the terminal cost function is computed offline, and (2) the MPC control law is computed online. We tested this approach and found that the computation time is very short for reasonably large problems. The details are deferred to a future publication.

\section{Numerical Experiments}\label{sec:example}
In this section we present numerical experiments run on a 2.3 GHz Intel Core i5, MacBook Pro laptop, using the MATLAB YALMIP Toolbox, version 2.6.3 \cite{YALMIP_08}. Consider the stochastic system: $x_{k+1}=A(w_k)x_k+B(w_k)u_k$, where $w_k\in\{1,2,3\}$ and 
\[{\small
\begin{split}
&A_1=\begin{bmatrix}
2  &   0.5\\
     -0.5 &    2
     \end{bmatrix}\!, A_2=\begin{bmatrix}
     0.01 &0.1\\
     0.05 &0.01    \end{bmatrix}\!, A_3=\begin{bmatrix}
1.5 &-0.3\\
0.2 &1.5
\end{bmatrix}\!,\\
&B_1=\begin{bmatrix}
3&0.1\\
0.1&3
    \end{bmatrix},\,\,B_2=\begin{bmatrix}
1&0.5\\
0.5&1\\
   \end{bmatrix},\,\,B_3=\begin{bmatrix}
2&0.3\\
0.3&2   \end{bmatrix}.
   \end{split}}
\]
The probability mass function for the process disturbance is uniformly distributed, i.e., $\mathbb{P}(w_k=i)=\frac{1}{3}$, for $i\in\{1,2,3\}$. In this example, we want to explore the risk aversion capability of the risk-averse MPC  algorithm presented in Section \ref{sec:MPC} (the solution relies on the convex programming approach). We consider as risk-aversion metric the mean upper semi-deviation metric (see Section \ref{sec:prelim}), where $c$ ranges in the set $\{0,0.25,0.5,0.75,1\}$. The initial condition is $x_{0}(1)=x_{0}(2)=1$ and the number of lookahead steps is $3$. We set $Q=1\times I_{2\times 2}$ and $R=10^{-4}  I_{2\times 2}$. We perform 100 Monte Carlo simulations for each value of $c$. When $c=0$, the problem is reduced to a risk-neutral minimization problem. On the other hand, one enforces maximum emphasis on regulating semi-deviation (dispersion) by setting $c=1$. Table \ref{table_aver} summarizes our results (computation times are given in seconds). When $c=0$ (risk neutral formulation), the average cost is the lowest (with respect to the different choices for $c$), but the dispersion is the largest. Conversely, when $c=1$, the dispersion is the lowest, but the average cost is the largest. 
\begin{center}
\begin{table}
     \caption{Statistics for Risk-Averse MPC.} \label{table_aver}
       \begin{tabular}{ l l l l l l l}
         \hline
     Level &Sample  &Sample & Sample& Mean Time\\
    of risk &Mean &Dispersion & Standard Deviation &Per Itr. (Sec)\\
     \hline
      c=0 &2.9998 &0.2889&0.4245&4.8861 \\
      c=0.25 &3.3012&0.2643&0.3520&5.2003\\
      c=0.5 &3.4178&0.2004&0.2977&4.4007\\
      c=0.75 &3.5898&0.1601&0.2231&4.4577\\
      c=1&3.6072&0.0903&0.1335&4.6498\\
     \hline
  \end{tabular}
    \end{table}
\end{center}

\section{Conclusion and Future Work}\label{sec:conclusion}
In this paper we presented a framework for risk-averse MPC. Advantages of this framework include: (1) it is axiomatically justified; (2) it is amenable to dynamic and convex optimization; and (3) it is general, in the sense that it captures a full range of risk assessments from risk-neutral to worst case (given the generality of Markov polytopic risk metrics). 

This paper leaves numerous important extensions open for further research. First, we plan to study the case with state and control constraints (preliminary work suggests that such constraints can be readily included in our framework). Second, it is of interest to consider the inclusion of multi-period risk constraints. Third, we plan to characterize the sub-optimal performance of risk-averse MPC by designing lower and upper bounds for the optimal cost. Fourth, we plan to study in detail the offline and hybrid counterparts of the online MPC algorithm presented in this paper. Finally, it is of interest to study the robustness of our approach with respect to perturbations in the system's parameters.

\bibliographystyle{unsrt} 
\bibliography{ref_dyn_mpc}

\begin{thebibliography}{10}

\bibitem{Qin_Badgwell_03}
J.~Qin and T.~Badgwell.
\newblock {A survey of industrial model predictive control technology}.
\newblock {\em Control Engineering Practice}, 11(7):733--764, 2003.

\bibitem{Wang.Boyd:CST10}
Y.~Wang and S.~Boyd.
\newblock Fast model predictive control using online optimization.
\newblock {\em IEEE Transactions on Control Systems Technology},
  18(2):267--278, 2010.

\bibitem{Mayne.ea:Auto00}
D.~Q. Mayne, J.~B. Rawlings, C.~V. Rao, and P.~O.~M. Scokaert.
\newblock Constrained model predictive control: stability and optimality.
\newblock {\em Automatica}, 36(6):789--814, 2000.

\bibitem{Bernardini_Bemporad_12}
D.~Bernardini and A.~Bemporad.
\newblock {Stabilizing model predictive control of stochastic constrained
  linear systems}.
\newblock {\em IEEE Transactions on Automatic Control}, 57(6):1468--1480, 2012.

\bibitem{Kothare_Balakrishnan_Morari_96}
M.~Kothare, V.~Balakrishnan, and M.~Morari.
\newblock {Robust constrained model predictive control using Linear Matrix
  Inequalities}.
\newblock {\em Automatica}, 32(10):1361--1379, 1996.

\bibitem{Souza_06}
C.~de~Souza.
\newblock {Robust stability and stabilization of uncertain discrete-time
  Markovian jump linear systems}.
\newblock {\em IEEE Transactions on Automatic Control}, 51(5):836--841, 2006.

\bibitem{Park_Kwon_02}
B.~Park and W.~Kwon.
\newblock {Robust one-step receding horizon control of discrete-time Markovian
  jump uncertain systems}.
\newblock {\em Automatica}, 38(7):1229--1235, 2002.

\bibitem{Primbs_Sung_09}
J.~Primbs A and C.~Sung.
\newblock {Stochastic receding horizon control of constrained linear systems
  with state and control multiplicative noise}.
\newblock {\em IEEE Transactions on Automatic Control}, 54(2):221--230, 2009.

\bibitem{Defourny_08}
B.~Defourny, D.~Ernst, and L.~Wehenkel.
\newblock {Risk-aware decision making and dynamic programming}.
\newblock In {\em NIPS Workshop on Model Uncertainty and Risk in Reinforcement
  Learning}, 2008.

\bibitem{Moldovan:2012}
T.~M. Moldovan and P.~Abbeel.
\newblock {Risk aversion in Markov decision processes via near optimal Chernoff
  bounds}.
\newblock In {\em NIPS}, pages 3140--3148, 2012.

\bibitem{Iancu_11}
D.~A. Iancu, M.~Petrik, and D.~Subramanian.
\newblock {Tight approximations of dynamic risk measures}.
\newblock 2013.
\newblock Submitted, available at \url{http://arxiv.org/abs/1106.6102}.

\bibitem{Zafra:11b}
A.~Zafra-Cabeza, J.M. Maestre, M.~A. Ridao, E.~F. Camacho, and L.~S\'anchez.
\newblock {A hierarchical distributed model predictive control approach to
  irrigation canals: a risk mitigation perspective}.
\newblock {\em Journal of Process Control}, 21(5):787 -- 799, 2011.

\bibitem{vanOverloop:08}
P.-J. van Overloop, S.~Weijs, and S.~Dijkstra.
\newblock Multiple model predictive control on a drainage canal system.
\newblock {\em Control Engineering Practice}, 16(5):531 -- 540, 2008.

\bibitem{rus_shapiro_06}
A.~Ruszczynski and A.~Shapiro.
\newblock {Conditional risk mappings}.
\newblock {\em Mathematics of operations research}, 21(3):544--561, 2006.

\bibitem{rus_09}
A.~Ruszczynski.
\newblock {Risk averse dynamic programming for Markov decision process}.
\newblock {\em Journal of Mathematical Programming}, 125(2):235--261, 2010.

\bibitem{artzner_delbaen_eber_heath_98}
P.~Artzner, F.~Delbaen, J.~Eber, and D.~Heath.
\newblock {Coherent measures of risk}.
\newblock {\em Mathematical finance}, 9(3):203--228, 1998.

\bibitem{Shapiro_Dentcheva_Ruszczynski_09}
A.~Shapiro, D.~Dentcheva, and A.~Ruszczynski.
\newblock {\em {Lectures on stochastic programming: modeling and theory}}.
\newblock SIAM, 2009.

\bibitem{Cheridito_09}
P.~Cheridito and M.~Stadjie.
\newblock {Time inconsistency of VaR and time-consistent alternatives}.
\newblock {\em Finance Research Letters.}, 6(1):40--46, 2009.

\bibitem{shapiro_09}
A.~Shapiro.
\newblock {On a time consistency concept in risk averse multi-stage stochastic
  programming}.
\newblock {\em Operations Research Letters}, 37(3):143--147, 2009.

\bibitem{Roorda:05}
B.~Roorda, J.~M. Schumacher, and J.~Engwerda.
\newblock {Coherent acceptability measures in multi-period models}.
\newblock {\em Mathematical Finance}, 15(4):589--612, 2005.

\bibitem{Eichhorn_05}
A.~Eichhorn and W.~R{\"o}misch.
\newblock {Polyhedral risk measures in stochastic programming}.
\newblock {\em SIAM Journal on Optimization}, 16(1):69--95, 2005.

\bibitem{Ogryczak_Ruszczynski_99}
W.~Ogryczak and A.~Ruszczy{\'n}ski.
\newblock {From stochastic dominance to mean-risk models: semi-deviations as
  risk measures}.
\newblock {\em European Journal of Operational Research}, 116(1):33--50, 1999.

\bibitem{Rockafellar_Uryasev_00}
R.~Rockafellar and S.~Uryasev.
\newblock {Optimization of conditional value-at-risk}.
\newblock {\em Journal of Risk}, 2:21--42, 2000.

\bibitem{Bertsimas_Brown_09}
D.~Bertsimas and D.~Brown.
\newblock {Constructing uncertainty sets for robust linear optimization}.
\newblock {\em Operations Research}, 57(6):1483--1495, 2009.

\bibitem{Ben-Tal_07}
A.~Ben-Tal and M.~Teboulle.
\newblock {An old-new concept of convex risk measures: the optimized certainty
  equivalent}.
\newblock {\em Mathematical Finance}, 17(3):449--476, 2007.

\bibitem{Skelton_Iwasaki_Grigoriadis_98}
R.~Skelton, T.~Iwasaki, and K.~Grigoriadis.
\newblock {\em A unified algebraic approach to linear control design}.
\newblock CRC Press, 1998.

\bibitem{Chow_Pavone_13_2}
Y.~Chow and M.~Pavone.
\newblock {A uniform-grid discretization algorithm for dynamic programming with
  risk constraints}.
\newblock In {\em Conference on Decision and Control}, 2013.

\bibitem{chow1991optimal}
C.~Chow and J.~Tsitsiklis.
\newblock An optimal one-way multigrid algorithm for discrete-time stochastic
  control.
\newblock {\em Automatic Control, IEEE Transactions on}, 36(8):898--914, 1991.

\bibitem{YALMIP_08}
J.~L{\"o}fberg.
\newblock {YALMIP : A toolbox for modeling and optimization in MATLAB}.
\newblock In {\em CCA/ISIC/CACSD}, September 2004.

\end{thebibliography}

\end{document}